\newcounter{LemmaCounter}
\newcounter{TheoremCounter}
\newtheorem{theorem}[TheoremCounter]{Theorem}
\newtheorem{corollary}[TheoremCounter]{Corollary}
\newtheorem{definition}[TheoremCounter]{Definition}
\begin{document}
\title{Abel Summation of Ramanujan-Fourier Series}
\author{John Washburn}
\address{N128W12795 Highland Road \\ Germantown, WI 53022}
\urladdr{http://www.WashburnResearch.org}
\email{math@WashburnResearch.org}
\date{\today}
\keywords{
Analytic Number Theory,
Ramanujan-Fourier Series,
Ramanujan Sums,
Distribution of Primes,
Twinned Primes,
Twin Primes,
Hardy-Littlewood Conjecture,
Sophie Primes,
arithmetic functions
}
\subjclass[2000]{Primary 11A41; Secondary 11L03, 11L07, 42A32}

\begin{abstract}

The main result of this paper is:

Given two arithmetic functions, $f(n)$ and $g(n)$, having point-wise convergent Ramanujan-Fourier (R-F) expansions of:
\begin{align*}
f(n) = \sum\limits_{q=1}^{\infty} a_q c_q(n)
&\quad \text{and} \quad
g(n) = \sum\limits^{\infty}_{q=1} b_q c_q(n)
\end{align*}
and there exist two finite bounds (not necessarily distinct), $K_f$ and $K_g$, 
such that for all $q \ge 1$ and all $n \ge 1$, 
$a_q$ and $b_q$, are bounded by:
\begin{align*}
\left\vert a_q \phi\left( q \right) \right\vert \le K_f
&\quad \text{and} \quad
\left\vert b_q \phi\left( q \right) \right\vert \le K_g
\end{align*}
where $\phi\left( q \right)$ is the Euler totient function.
\\

Then:
\begin{equation} \label{Eq:Abstract:IsAbelSummable}
\lim_{N \rightarrow \infty} \frac{1}{N} \sum^{N}_{n=1} f(n) \overline{g(n \pm m)}
\quad \text{is Abel summable to} \quad
\sum_{q=1}^{\infty} a_q \overline{b_q} c_q(m)
\end{equation}

When applied to the auto-correlation function of an arithmetic function, $f(n)$, the result is
the Wiener-Khichtine theorem as applied to Ramanujan-Fourier series.  Namely:
\begin{equation} \label{Eq:Abstract:WienerKhinchinForRamanujanSums}
\lim_{N \rightarrow \infty} \frac{1}{N} \sum^{N}_{n=1}
f\left( n \right)
\overline{f \left( n \pm m \right)}
=
\sum_{q=1}^{\infty} \Vert a_q \Vert^2 c_q(m)
\end{equation}

Equation \eqref{Eq:Abstract:WienerKhinchinForRamanujanSums} is the
the central relationship to be proved in within the works of
H. G. Gadiyar and R. Padma,
\cite{BibItem:GadiyarPadma:WienerKhinchin},
\cite{BibItem:GadiyarPadma:SophieGermainPrimes},
\cite{BibItem:GadiyarPadma:RotaMeetsRamanujan}, and
\cite{BibItem:GadiyarPadma:LinkingCircleAndSieve},
as related  to the following conjectures in number theory:
\begin{enumerate} 
\item The Hardy-Littlewood prime k-tuple conjecture for k=2.
\item The twinned prime conjecture.
\item The Sophie Germaine primes conjecture.
\end{enumerate}

While the statement in \eqref{Eq:Abstract:IsAbelSummable} is weaker than
\eqref{Eq:Abstract:WienerKhinchinForRamanujanSums},
it is powerful evidence supporting the conjecture by H. G. Gadiyar and R. Padma
that \eqref{Eq:Abstract:WienerKhinchinForRamanujanSums} is true.

\end{abstract}

\maketitle

\newpage
\section{Introduction}

One of the remarkable achievements of
Ramanujan\cite{BibItem:Ramanujan01}, Hardy\cite{BibItem:Hardy01} and Carmichael\cite{BibItem:Carmichael01}
was the development of Ramanujan-Fourier series which converge (point-wise) to an arithmetic function.
The Ramanujan-Fourier series, for an arithmetic function, $f(n)$, can given by
\begin{equation*}
f(n) = \sum\limits_{q=1}^{\infty} a_q c_q(n)
\end{equation*}
where the Ramanujan sum, $c_q(n)$, is defined as
\begin{equation*}
c_q(n) = \sum\limits^{q}_{\substack{k=1 \\ (k,q)=1}} e^{2 \pi i \frac{k}{q} n }
\end{equation*}
and $(k,q)$ is the greatest common divisor of $k$ and $q$.
The Ramanujan-Fourier coefficient, $a_q$, is given by:
\begin{equation*}
a_q = \frac{1}{\phi(q)} \lim_{N \to \infty} \sum_{n=1}^{N} f(n) c_q(n)
\end{equation*}
where $\phi(q)$ is the Euler totient function.
\\
Some of the properties of $c_q(n)$ and the mean value of $c_q(n)$ are:
\begin{enumerate}[ a) ] \label{List:Properties:RamanujanSums}
\item
\begin{equation*}
c_1(n) = 1 \quad \text{for all $n$.} \label{CqProperty:c1_Equals_1}
\end{equation*}
\item
\begin{equation*} \label{CqProperty:c0=Phi}
c_q(0) = \phi(q) \quad \text{for all $q > 1$.}
\end{equation*}
\item
\begin{equation*} \label{CqProperty:cn_Equals_DividesOrNot}
c_q(n) = \begin{cases}
\phi(q) & q \mid  n
\\
-1 & q \not\mid  n
\end{cases}
\end{equation*}
\item
\begin{equation*} \label{CqProperty:cn_Equals_AbsoluteValue}
\left\vert c_q(n) \right\vert \le \left\vert \phi(q) \right\vert \quad \forall \, n,q  \ge 1
\end{equation*}
\item
\begin{equation*} \label{CqProperty:EvenFunction}
c_q(n) = c_q(-n) \quad \text{for all $q$ and all $n$.}
\end{equation*}
\item
\begin{equation*} \label{CqProperty:MeanValue:Single}
\lim_{N \to \infty} \sum_{n=1}^{N} c_q(n) =
\begin{cases}
1 & q=1 \\
0 & \text{otherwise} \\
\end{cases}
\end{equation*}
\item
\begin{equation*} \label{CqProperty:MeanValue:Convolution}
\lim_{N \to \infty} \sum_{n=1}^{N} c_{q_1}(n) c_{q_2}(n \pm m) =
\begin{cases}
c_{q_1}(m) & {q_1} = {q_2} \\
0 & {q_1} \ne {q_2} \\
\end{cases}
\end{equation*}
\end{enumerate}

\newpage
\section{Definitions and Notation}

This paper has the following notational conventions:
\begin{itemize}
\item $n$ is a positive integer.
\item $x$ is real.
\item $p$ is an arbitrary prime.
\item $q$ is a positive integer unless the context clearly indicates otherwise.
\item $z$ is a real within the open interval: $0 < z < 1$.
\end{itemize}

\begin{definition} \label{def:RamanujanSum:Integer}
For all integers $n \in \mathbb{Z}$,
the classical (integer) Ramanujan sum, $c_q\left( n \right)$, is defined as:
\begin{equation*}
c_q\left( n \right) =
\sum\limits_{\substack{k=1 \\ (k,q)=1}}^{q}
e^{ 2 \pi \frac{k}{q} n }
\end{equation*}
or equivalently as:
\begin{equation*}
c_q\left( n \right) =
\begin{cases}
1 & q = 1
\\
\left( -1 \right)^n & q = 2
\\
2 \sum\limits_{\substack{k=1 \\ (k,q)=1}}^{\left\lfloor \frac{q}{2} \right\rfloor}
\cos\left( 2 \pi \frac{k}{q} n \right) & q \ge 3
\end{cases}
\end{equation*}
\end{definition}

\begin{definition} \label{def:RamanujanSum:Real}
The Ramanujan sums can be extended to the reals by defining, $c_q\left( x \right)$, as:
\begin{equation*}
c_q\left( x \right) =
\begin{cases}
1 & q = 0
\\
\cos\left( 2 \pi x \right) & q = 1
\\
\cos\left( \pi x \right) & q = 2
\\
2 \sum\limits_{\substack{k=1 \\ (k,q)=1}}^{\left\lfloor \frac{q}{2} \right\rfloor}
\cos\left( 2 \pi \frac{k}{q} x \right) & q \ge 3
\end{cases}
\end{equation*}
\end{definition}

\begin{corollary}
A corollary of \Cref{def:RamanujanSum:Real} is
\begin{equation*}
c_q(x) = c_q(n)
\end{equation*}
for all $q \ge 1$ and $x \in \mathbb{Z}$
\end{corollary}

\begin{definition} \label{def:RamanujanExpansion:PowerSeries}
Given an arithmetic function, $f(n)$, with a point-wise convergent Ramanujan-Fourier (R-F) expansion of:
\begin{equation*}
f(n) = \sum\limits_{q=1}^{\infty} a_q c_q(n)
\end{equation*}
then the related functions, $f(z, n)$ and $f\left(z, x\right)$,
are the Ramanujan expansion power series and are given by:
\begin{equation*}
\begin{aligned}
f(z, n)
&\stackrel{def}{=}
\sum\limits_{q=1}^{\infty} a_q z^q c_q(n)
\\
f\left(z, x\right)
&\stackrel{def}{=}
\sum\limits_{q=1}^{\infty} a_q z^q c_q(x)
\end{aligned}
\end{equation*}
\end{definition}

\begin{definition} \label{def:RamanujanExpansion:PartialSummation}
Given a Ramanujan expansion power series of:
\begin{align*}
f(z, n) &= \sum\limits_{q=1}^{\infty} a_q z^q c_q(n)
\\
&\text{or}
\\
f\left(z, x\right) &= \sum\limits_{q=1}^{\infty} a_q z^q c_q(x)
\end{align*}
then the partial summation of the Ramanujan expansion power series for the first $Q$ terms is given by:
\begin{align*}
f_Q\left(z, n \right) 
&\stackrel{def}{=}
\sum\limits_{q=1}^{Q} a_q z^q c_q(n)
\\
f_Q\left(z, x \right) 
&\stackrel{def}{=}
\sum\limits_{q=1}^{Q} a_q z^q c_q(x)
\end{align*}
\end{definition}

\newpage

\begin{theorem}[Uniform Convergence] \label{lemma:UniformConvergenceConvergence:FQzx}
Given an Ramanujan expansion series, $f\left(z, x\right)$, with an R-F expansion of
\begin{equation*}
f\left(z, x\right) = \sum\limits_{q=1}^{\infty} a_q z^q c_q(x)
\end{equation*}
where, 
\begin{itemize}
\item for all $q \ge 1$, and
\item for all $x \in \mathbb{R}$, and
\item for some finite bound, $K_f$, and
\item $\phi(q)$ is the Euler totient function
\end{itemize}
the R-F coefficients, $a_q$, are bounded by:
\begin{equation*}
\left\vert a_q \phi(q) \right\vert \le K_f
\end{equation*}
\\

Then, the sequence of partial summations, $f_{Q}(z, x)$, converges uniformly in $x$ to $f\left(z, x\right)$
as $Q \to \infty$.
\end{theorem}

\begin{proof}
\begin{equation}
f\left(z, x\right) = \sum\limits_{q=1}^{\infty} a_q z^q c_q(x)
\end{equation}

\begin{equation}
f\left(z, x\right) = \sum\limits_{q=1}^{Q} a_q z^q c_q(x)  + \sum\limits_{q=Q+1}^{\infty} a_q z^q c_q(x)
\end{equation}

\begin{equation}
f\left(z, x\right) - \sum\limits_{q=1}^{Q} a_q z^q c_q(x)  = \sum\limits_{q=Q+1}^{\infty} a_q z^q c_q(x)
\end{equation}

\begin{equation}
\left\vert f\left(z, x\right) - f_Q\left(z, x\right)  \right\vert \le \left\vert \sum\limits_{q=Q+1}^{\infty} a_q z^q c_q(x) \right\vert
\end{equation}

\begin{equation} \label{eq:AfterTriangleInequality}
\left\vert f\left(z, x\right) - f_Q\left(z, x\right)  \right\vert \le \sum\limits_{q=Q+1}^{\infty} \left\vert a_q c_q(x) \right\vert \left\vert z^q \right\vert
\end{equation}
Since $\left\vert a_q c_q(x) \right\vert \le K_f$ and $0 < z$ \eqref{eq:AfterTriangleInequality} becomes:
\begin{equation}
\left\vert f\left(z, x\right) - f_Q\left(z, x\right)  \right\vert \le M_f \sum\limits_{q=Q+1}^{\infty} z^q
\end{equation}

\begin{equation} 
\left\vert f\left(z, x\right) - f_Q\left(z, x\right)  \right\vert \le M_f \left( \frac{ z^{Q+1}}{1-z} \right)
\end{equation}

\begin{equation} \label{eq:PartialSummation:Tail:Bounds}
\left\vert f\left(z, x\right) - f_Q\left(z, x\right)  \right\vert \le z^{Q} \left( \frac{ z M_f }{1-z} \right)
\end{equation}

Since, $0 < z < 1$, the right hand side of the \eqref{eq:PartialSummation:Tail:Bounds}
forms a strictly decreasing sequence in $Q$ such that:
\begin{equation*}
z^{Q+N}
\left( \frac{ z  M_f  }{1-z} \right)
<
z^{Q+N-1}
\left( \frac{ z  M_f  }{1-z} \right)
\cdots
<
z^{Q+2}
\left( \frac{ z  M_f  }{1-z} \right)
<
z^{Q+1}
\left( \frac{ z  M_f  }{1-z} \right)
<
z^Q
\left( \frac{ z  M_f  }{1-z} \right)
\end{equation*}
for all integers $N \ge 0$.

Thus, for any fixed $z$ in the interval: $0 < z < 1$, and for any $0 < \epsilon$, it is possible to select a value, $Q_z\left( \epsilon \right)$,
such that
\begin{equation*}
z^{Q} \left( M_f  \frac{ z }{1-z} \right) \le \epsilon
\end{equation*}
for all $Q \ge Q_z\left( \epsilon \right)$.

Since the selection of $Q_z\left( \epsilon \right)$ depends only on $z$ and does not depend on $x$,
the sequence of functions, $f_Q\left(z, x \right)$,
converges uniformly to $f\left(z, x \right)$ in $x \in \mathbb{R}$ 
to the Ramanujan expansion power series, $f\left(z, x \right)$
for every fixed $z$ on the interval $0 < z < 1$.
\end{proof}

\begin{corollary} \label{cor:UniformConvergenceConvergence:FQzx:Integers}
A corollary to \Cref{lemma:UniformConvergenceConvergence:FQzx} is 
for any fixed $z$ within the open interval, $0 < z < 1$, the sequence of functions,
$f_Q\left(z, x \right)$,
converges uniformly to $f\left(z, n \right)$
for all $n \in \mathbb{Z}$
\end{corollary}

\newpage
\section{Proof of \Cref{thm:AbelSummationTheorem}}

\begin{theorem}[Abel Summability] \label{thm:AbelSummationTheorem}
Given two arithmetic functions, $f(n)$ and $g(n)$, with point-wise convergent Ramanujan-Fourier expansions of:
\begin{equation*}
f(n) = \sum\limits_{q=1}^{\infty} a_q c_q(n) \qquad
\text{and} \qquad
g(n) = \sum\limits^{\infty}_{q=1} b_q c_q(n)
\end{equation*}
where, 
\begin{itemize}
\item for all $q \ge 1$, and
\item for all $x \in \mathbb{R}$,  
\item for some finite bound, $K_f$, and
\item $\phi(q)$ is the Euler totient function
\end{itemize}
the R-F coefficients, $a_q$ and $b_q$ , are bounded by:
\begin{equation*}
\left\vert a_q \phi(q) \right\vert \le K_f
\text{ and }
\left\vert b_q \phi(q) \right\vert \le K_g
\end{equation*}
\\

Then
\begin{equation} \label{eq:BasicConvolutionExpression}
\lim_{N \rightarrow \infty} \frac{1}{N} \sum^{N}_{n=1} f(n) \overline{g(n \pm m)}
\end{equation}
is Abel summable to:
\begin{equation*}
\sum_{q=1}^{\infty} a_q \overline{b_q} c_q(m)
\end{equation*}
\end{theorem}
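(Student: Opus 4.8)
The plan is to push an Abel regularization parameter into the two expansions, reduce the resulting correlation to the orthogonality relation~(g) term by term, and then let the parameter tend to $1$. For $0<z<1$ set $f(z,n)=\sum_{q=1}^{\infty} a_q z^q c_q(n)$ and $g(z,n)=\sum_{q=1}^{\infty} b_q z^q c_q(n)$. The standing hypotheses are precisely those of \Cref{lemma:UniformConvergence:FQzn}, so for each fixed $z$ the partial sums converge to $f(z,n)$ and $g(z,n)$ uniformly in $n$, and both limits are bounded in $n$ by $\sum_q|a_q|\phi(q)z^q$ and $\sum_q|b_q|\phi(q)z^q$ respectively. Since $z^q\to1$ as $z\to1^-$, these regularized functions recover $f(n)$ and $g(n)$, so the object of study is the Abel-regularized form of \eqref{eq:BasicConvolutionExpression},
\[
A(z,m)=\lim_{N\to\infty}\frac{1}{N}\sum_{n=1}^{N} f(z,n)\,\overline{g(z,n\pm m)},
\]
and the theorem reduces to showing $\lim_{z\to1^-}A(z,m)=\sum_{q=1}^{\infty} a_q\overline{b_q}c_q(m)$.

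Next I would expand the integrand. The Ramanujan sums are real and even (property~(e)), so $\overline{g(z,n\pm m)}=\sum_{q}\overline{b_q}z^q c_q(n\pm m)$, and multiplying the two uniformly convergent series gives $f(z,n)\overline{g(z,n\pm m)}=\sum_{q_1,q_2} a_{q_1}\overline{b_{q_2}}z^{q_1+q_2}c_{q_1}(n)c_{q_2}(n\pm m)$. The decisive estimate is property~(d), $|c_q(n)|\le\phi(q)$: every term is bounded by $|a_{q_1}||b_{q_2}|z^{q_1+q_2}\phi(q_1)\phi(q_2)$, and this bound sums to the finite product $\big(\sum_q|a_q|\phi(q)z^q\big)\big(\sum_q|b_q|\phi(q)z^q\big)$. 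Being independent of $n$ and of $N$, this single dominating series legitimizes every interchange that follows.

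With the domination secured I would perform the interchanges in sequence: first swap the finite average with the absolutely convergent double sum to obtain
\[
\frac{1}{N}\sum_{n=1}^{N} f(z,n)\overline{g(z,n\pm m)}=\sum_{q_1,q_2} a_{q_1}\overline{b_{q_2}}z^{q_1+q_2}\Big(\frac{1}{N}\sum_{n=1}^{N} c_{q_1}(n)c_{q_2}(n\pm m)\Big),
\]
then take $N\to\infty$ inside the double sum by dominated convergence for series and apply the orthogonality property~(g). The latter annihilates every off-diagonal term and returns $c_q(m)$ when $q_1=q_2=q$, collapsing the double sum to $A(z,m)=\sum_{q=1}^{\infty} a_q\overline{b_q}z^{2q}c_q(m)$. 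Writing $w=z^2$, the passage $z\to1^-$ becomes $w\to1^-$, so $A(z,m)$ is exactly the Abel mean of $\sum_q a_q\overline{b_q}c_q(m)$; letting $z\to1^-$ delivers the stated Abel sum.

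The main obstacle is the chain of limit interchanges rather than the final Abel step. The inner averages $\frac{1}{N}\sum_{n=1}^{N}c_{q_1}(n)c_{q_2}(n\pm m)$ need not converge at a rate uniform across all pairs $(q_1,q_2)$, so moving $N\to\infty$ through the infinite double sum is the step that must be justified carefully; I would state the dominated-convergence argument for series explicitly, leaning on the $n$- and $N$-independent bound from property~(d) and its factorization through the two hypotheses. Once the regularized correlation is in the closed form $\sum_q a_q\overline{b_q}z^{2q}c_q(m)$, the concluding $z\to1^-$ limit is routine, being simply the definition of Abel summability.
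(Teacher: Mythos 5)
Your argument is correct, and it settles the one genuinely delicate step by a different mechanism than the paper does. Both proofs share the same skeleton: introduce the regularized functions $f(z,n)=\sum_q a_q z^q c_q(n)$ and $g(z,n)=\sum_q b_q z^q c_q(n)$, show that for each fixed $0<z<1$ the averaged correlation equals $\sum_{q=1}^{\infty} a_q \overline{b_q}\, z^{2q} c_q(m)$, and then let $z\to 1^-$. The difference is how the limit in $N$ is pushed through the infinite expansions. The paper never forms the infinite double series: it truncates both expansions at a common level $Q_{\epsilon}$ furnished by \cref{lemma:UniformConvergence:FQzn}, bounds the regularized correlation above and below by $\pm\epsilon$ perturbations of the truncated product, applies the mean-value properties of $c_q(n)$ to the finitely many surviving terms, and squeezes as $Q_{\epsilon}\to\infty$ (equations \eqref{eq:SandwichInequality:UpperLimit} and \eqref{eq:SandwichInequality:LowerLimit}). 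You instead keep the full double series and pass $\lim_{N\to\infty}\frac{1}{N}\sum_{n\le N}$ through it by dominated convergence over the index pairs $(q_1,q_2)$, the dominating series $\sum_{q_1,q_2}\vert a_{q_1}\vert \vert b_{q_2}\vert \phi(q_1)\phi(q_2) z^{q_1+q_2}$ factoring as the product of the two hypothesis series. Your route buys two things the paper's does not: first, it actually \emph{proves} that the $N\to\infty$ limit of the regularized correlation exists, which the paper tacitly presupposes the moment it writes that limit on the left-hand side of its inequalities; second, it is valid verbatim for complex-valued $f$ and $g$, whereas the paper's two-sided bounds \eqref{eq:CoreInequality:AsBoundedInterval:Fzn}--\eqref{eq:CoreInequality:AsBoundedInterval:Gzn}, and especially the multiplication of two such bounds to bound the product $f(z,n)\overline{g(z,n\pm m)}$, only make sense for real quantities of controlled sign. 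What the paper's route buys in exchange is economy of tools: it uses only the uniform-convergence lemma it has already proved plus elementary squeezing, with no appeal to a convergence theorem for double series. One caveat applies equally to both arguments: the concluding step $z\to 1^-$ is, as your substitution $w=z^2$ makes explicit, just the identification of the regularized correlation with the Abel means of $\sum_q a_q\overline{b_q}c_q(m)$, so in both treatments ``Abel summable to'' must be read in exactly that sense and nothing stronger.
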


\begin{proof}

For any fixed $z$ in the open interval $0 < z < 1$ the two Ramanujan expansion power series of
$f(n)$ and $g(n)$ are given by:
\begin{equation} \label{eq:DefinitionParameterizedArithmeticFunctions}
f(z, n) = \sum\limits_{q=1}^{\infty} a_q z^q c_q(n) \qquad
\text{and} \qquad
g(z, n) = \sum\limits^{\infty}_{q=1} b_q z^q c_q(n)
\end{equation}
By hypothesis, both $f(n)$ and $g(n)$ converge, thus, by Abel's Theorem, 
as the value of $z$ pushes up to 1 the Ramanujan expansion power series of \eqref{eq:DefinitionParameterizedArithmeticFunctions}
converge to:
\begin{equation}
\lim_{z \to 1^{-}} f(z, n) = f(n)\qquad
\text{and} \qquad
\lim_{z \to 1^{-}} g(z, n) = g(n)\qquad
\end{equation}

Since, by hypothesis, the Ramanujan expansion power series meet the requirements of 
\Cref{cor:UniformConvergenceConvergence:FQzx:Integers}
there exist two constants,  $Q_1$ and $Q_2$, such that:
\begin{equation}
\left\vert f(z, n) - f_{Q_1}(z, n) \right\vert < \epsilon \qquad
\text{and} \qquad
\left\vert g(z, n \pm m) - g_{Q_2}(z, n \pm m) \right\vert < \epsilon \qquad
\end{equation}

Selecting $Q_{\epsilon}$ as $\text{max}\left( Q_1, Q_2 \right)$ yields:
\begin{equation} \label{eq:CoreInequality:UniformConvergence}
\left\vert f(z, n) - f_{Q_{\epsilon}}(z, n) \right\vert < \epsilon \qquad
\text{and} \qquad
\left\vert g(z, n \pm m) - g_{Q{\epsilon}}(z, n \pm m) \right\vert < \epsilon \qquad
\end{equation}

The 2 inequalities in \eqref{eq:CoreInequality:UniformConvergence} can be stated as:
\begin{subequations} \label{eq:CoreInequality:AsBoundedInterval}
\begin{align}
f_{Q_{\epsilon}}(z, n) - \epsilon
\le
&f(z, n)
\le
f_{Q_{\epsilon}}(z, n) + \epsilon 
\label{eq:CoreInequality:AsBoundedInterval:Fzn}
\\
g_{Q_{\epsilon}}(z, n \pm m) - \epsilon
\le
&g(z, n \pm m)
\le
g_{Q_{\epsilon}}(z, n \pm m) + \epsilon
\label{eq:CoreInequality:AsBoundedInterval:Gzn}
\end{align}
\end{subequations}

Beginning with the R-F expansions of $f(z, n)$ and $g(z, n \pm m)$,
the convolution of $f(z,n)$ and $g(z,n \pm m)$ is given by:
\begin{equation} \label{eq:InitialSummation}
\lim_{N \to \infty} \frac{1}{N}  \sum_{n=1}^{N}
f(z, n) \overline{g(z, n \pm m)} =
\lim_{N \to \infty} \frac{1}{N}  \sum_{n=1}^{N}
\left[
\sum\limits_{q_1=1}^{\infty} a_{q_1} z^{q_1} c_{q_1}(n)
\sum\limits_{q_2=1}^{\infty} \overline{b_{q_2}} z^{q_2} c_{q_2}(n \pm m)
\right]
\end{equation}

Applying the right hand sides of the inequalities 
\eqref{eq:CoreInequality:AsBoundedInterval:Fzn} and \eqref{eq:CoreInequality:AsBoundedInterval:Gzn} 
to \eqref{eq:InitialSummation} yields:

\begin{equation}
\lim_{N \to \infty} \frac{1}{N}  \sum_{n=1}^{N}
f(z, n) \overline{g(z, n \pm m)} \le
\lim_{N \to \infty} \frac{1}{N}  \sum_{n=1}^{N}
\left[
\epsilon +
\sum\limits_{q_1=1}^{Q_{\epsilon}} a_{q_1} z^{q_1} c_{q_1}(n)
\right]
\left[
\epsilon +
\sum\limits_{q_2=1}^{Q_{\epsilon}} \overline{b_{q_2}} z^{q_2} c_{q_2}(n \pm m)
\right]
\end{equation}

\begin{equation}
\begin{aligned}
\lim_{N \to \infty} \frac{1}{N}  \sum_{n=1}^{N}
f(z, n) \overline{g(z, n \pm m)}
&\le
\lim_{N \to \infty} \frac{1}{N}  \sum_{n=1}^{N}
\left[
\sum\limits_{q_1=1}^{Q_{\epsilon}} a_{q_1} z^{q_1} c_{q_1}(n)
\sum\limits_{q_2=1}^{Q_{\epsilon}} \overline{b_{q_2}} z^{q_2} c_{q_2}(n \pm m)
\right]
\\
&+
\lim_{N \to \infty} \frac{1}{N}  \sum_{n=1}^{N}
\left[
\epsilon
\sum\limits_{q_1=1}^{Q_{\epsilon}} a_{q_1} z^{q_1} c_{q_1}(n)
\right]
\\
&+
\lim_{N \to \infty} \frac{1}{N}  \sum_{n=1}^{N}
\left[
\epsilon
\sum\limits_{q_2=1}^{Q_{\epsilon}} \overline{b_{q_2}} z^{q_2} c_{q_2}(n \pm m)
\right]
\\
&+
\lim_{N \to \infty} \frac{1}{N}  \sum_{n=1}^{N}
\left[
{\epsilon}^2
\right]
\end{aligned}
\end{equation}

\begin{equation} \label{eq:PenultimateAfterLimitsExchanged}
\begin{aligned}
\lim_{N \to \infty} \frac{1}{N}  \sum_{n=1}^{N}
f(z, n) \overline{g(z, n \pm m)}
&\le
\sum\limits_{q_1=1}^{Q_{\epsilon}}
\sum\limits_{q_2=1}^{Q_{\epsilon}}
a_{q_1} \overline{b_{q_2}}
z^{q_1} z^{q_2}
\left[
\lim_{N \to \infty} \frac{1}{N}  \sum_{n=1}^{N}
c_{q_1}(n)
c_{q_2}(n \pm m)
\right]
\\
&+
\epsilon
\sum\limits_{q_1=1}^{Q_{\epsilon}} a_{q_1} z^{q_1}
\left[
\lim_{N \to \infty} \frac{1}{N}  \sum_{n=1}^{N}
c_{q_1}(n)
\right]
\\
&+
\epsilon
\sum\limits_{q_2=1}^{Q_{\epsilon}} \overline{b_{q_2}} z^{q_2}
\left[
\lim_{N \to \infty} \frac{1}{N}  \sum_{n=1}^{N}
c_{q_2}(n \pm m)
\right]
\\
&+
{\epsilon}^2
\left[
\lim_{N \to \infty} \frac{1}{N}  \sum_{n=1}^{N}
1
\right]
\end{aligned}
\end{equation}

Applying the limit process, $N \to \infty$ and
using the properties:
\ref{CqProperty:cn_Equals_AbsoluteValue},
\ref{CqProperty:MeanValue:Single}, and
\ref{CqProperty:MeanValue:Convolution}
of $c_q(n)$ listed in the introduction,
equation \eqref{eq:PenultimateAfterLimitsExchanged} becomes:
\begin{equation}
\begin{aligned}
\lim_{N \to \infty} \frac{1}{N}  \sum_{n=1}^{N}
f(z, n) \overline{g(z, n \pm m)}
&\le
\sum\limits_{q=1}^{Q_{\epsilon}}
a_{q} \overline{b_{q}}
z^{2 q}
c_{q}(m)
\\
&+
\epsilon
\left[
a_{1} z^{1}
\right]
\\
&+
\epsilon
\left[
\overline{b_{1}} z^{1}
\right]
\\
&+
{\epsilon}^2
\end{aligned}
\end{equation}

\begin{equation}
\begin{aligned}
\lim_{N \to \infty} \frac{1}{N}  \sum_{n=1}^{N}
f(z, n) \overline{g(z, n \pm m)}
&\le
\sum\limits_{q=1}^{Q_{\epsilon}}
a_{q} \overline{b_{q}}
z^{2 q}
c_{q}(m)
\\
&+
\epsilon \left\vert a_{1} \right\vert
\\
&+
\epsilon \left\vert \overline{b_{1}} \right\vert
\\
&+
{\epsilon}^2
\end{aligned}
\end{equation}

Thus, the upper bound is given by:
\begin{equation} \label{eq:SandwichInequality:UpperLimit}
\lim_{N \to \infty} \frac{1}{N}  \sum_{n=1}^{N}
f(z, n) \overline{g(z, n \pm m)}
\le
\left(
\sum\limits_{q=1}^{Q_{\epsilon}}
a_{q} \overline{b_{q}}
z^{2 q}
c_{q}(m)
\right)
+
\epsilon \left( \left\vert a_{1} \right\vert + \left\vert \overline{b_{1}} \right\vert \right)
+
{\epsilon}^2
\end{equation}

Applying the left hand sides of the inequalities from
\eqref{eq:CoreInequality:AsBoundedInterval:Fzn} and \eqref{eq:CoreInequality:AsBoundedInterval:Gzn} 
to \eqref{eq:InitialSummation}
and using similar reasoning,
it can be shown that the lower bound is given by:
\begin{equation} \label{eq:SandwichInequality:LowerLimit}
\lim_{N \to \infty} \frac{1}{N}  \sum_{n=1}^{N}
f(z, n) \overline{g(z, n \pm m)}
\ge
\left(
\sum\limits_{q=1}^{Q_{\epsilon}}
a_{q} \overline{b_{q}}
z^{2 q}
c_{q}(m)
\right)
-
\epsilon \left( \left\vert a_{1} \right\vert + \left\vert \overline{b_{1}} \right\vert \right)
+
{\epsilon}^2
\end{equation}

Combining the upper and lower bounds of
\eqref{eq:SandwichInequality:UpperLimit} and
\eqref{eq:SandwichInequality:LowerLimit}
yields:
\begin{equation}
\begin{aligned}
\left(
\sum\limits_{q=1}^{Q_{\epsilon}}
a_{q} \overline{b_{q}}
z^{2 q}
c_{q}(m)
\right)
-
\epsilon \left( \left\vert a_{1} \right\vert + \left\vert \overline{b_{1}} \right\vert \right)
+
{\epsilon}^2
&\le
\lim_{N \to \infty} \frac{1}{N}  \sum_{n=1}^{N}
f(z, n) \overline{g(z, n \pm m)}
\\
&\le
\left(
\sum\limits_{q=1}^{Q_{\epsilon}}
a_{q} \overline{b_{q}}
z^{2 q}
c_{q}(m)
\right)
+
\epsilon \left( \left\vert a_{1} \right\vert + \left\vert \overline{b_{1}} \right\vert \right)
+
{\epsilon}^2
\end{aligned}
\end{equation}

\begin{equation} \label{eq:LeftHandSideIsNotZero}
\begin{aligned}
{\epsilon}^2
-
\epsilon \left( \left\vert a_{1} \right\vert + \left\vert \overline{b_{1}} \right\vert \right)
&\le
\lim_{N \to \infty} \frac{1}{N}  \sum_{n=1}^{N}
f(z, n) \overline{g(z, n \pm m)}
-
\left(
\sum\limits_{q=1}^{Q_{\epsilon}}
a_{q} \overline{b_{q}}
z^{2 q}
c_{q}(m)
\right)
\\
&\le
{\epsilon}^2
+
\epsilon \left( \left\vert a_{1} \right\vert + \left\vert \overline{b_{1}} \right\vert \right)
\end{aligned}
\end{equation}

Due to the point-wise convergence of the R-F expansions for
$f(n)$ and $g(n)$,
both $a_1$ and $b_1$ are necessarily finite.
The value of $\epsilon$ can be choose to arbitrarily small.
Thus, by selecting  a value of $\epsilon$ such that
\begin{equation*}
\epsilon =
\begin{cases}
0 < \epsilon \le 1 & a_{1} = b_{1} = 0
\\
0 < \epsilon \le
\left( \left\vert a_{1} \right\vert + \left\vert \overline{b_{1}} \right\vert \right)
& otherwise
\end{cases}
\end{equation*}
equation \eqref{eq:LeftHandSideIsNotZero} becomes:

\begin{equation}  \label{eq:SandwichInequality:Epsilon}
0 \le
\left\vert
\lim_{N \to \infty} \frac{1}{N}  \sum_{n=1}^{N}
f(z, n) \overline{g(z, n \pm m)}
-
\left(
\sum\limits_{q=1}^{Q_{\epsilon}}
a_{q} \overline{b_{q}}
z^{2 q}
c_{q}(m)
\right)
\right\vert
\\
\le
{\epsilon}^2
+
\epsilon \left( \left\vert a_{1} \right\vert + \left\vert \overline{b_{1}} \right\vert \right)
\end{equation}

Applying the squeeze theorem to \eqref{eq:SandwichInequality:Epsilon}
yields:
\begin{equation}
\lim_{\substack{Q_{\epsilon} \to \infty \\ \epsilon \to 0}}
\left\vert
\lim_{N \to \infty} \frac{1}{N}  \sum_{n=1}^{N}
f(z, n) \overline{g(z, n \pm m)}
-
\sum\limits_{q=1}^{Q_{\epsilon}}
a_{q}  \overline{b_{q}} z^{2 q} c_{q}(m)
\right\vert = 0
\end{equation}
which can be restated as:
\begin{equation} \label{eq:Result:AbelSummation:Step1}
\lim_{N \to \infty} \frac{1}{N}  \sum_{n=1}^{N}
f(z, n) \overline{g(z, n \pm m)}
=
\sum\limits_{q=1}^{\infty}
a_{q}  \overline{b_{q}} z^{2 q} c_{q}(m)
\end{equation}

Letting $z$ push up to 1 in equation \eqref{eq:Result:AbelSummation:Step1} yields:
\begin{equation} \label{eq:Result:AbelSummation:Step2}
\begin{aligned}
\lim_{z \to 1^{-}}
\lim_{N \to \infty} \frac{1}{N}  \sum_{n=1}^{N}
f(z, n) \overline{g(z, n \pm m)}
&=
\lim_{z \to 1^{-}}
\sum\limits_{{q}=1}^{\infty}
a_{q}  \overline{b_{q}} z^{2 q} c_{q}(m)
\\
\lim_{N \to \infty} \frac{1}{N}  \sum_{n=1}^{N}
f(n) \overline{g(n \pm m)}
&\to
\sum\limits_{{q}=1}^{\infty}
a_{q}  \overline{b_{q}} c_{q}(m)
\end{aligned}
\end{equation}

Thus:
\begin{equation*}
\lim_{N \to \infty} \frac{1}{N}  \sum_{n=1}^{N}
f(n) \overline{g(n \pm m)}
\end{equation*}
is Abel summable to:
\begin{equation*}
\sum\limits_{{q}=1}^{\infty}
a_{q}  \overline{b_{q}} c_{q}(m)
\end{equation*}

\end{proof}

\newpage
\section{Applicability of \Cref{thm:AbelSummationTheorem} to the work of Gadiyar and Padma}

The arithmetic function used by Gadiyar and Padma
in both \cite{BibItem:GadiyarPadma:WienerKhinchin} and \cite{BibItem:GadiyarPadma:SophieGermainPrimes}
is:
\begin{equation}
\frac{\phi(n)}{n} \Lambda\left( n \right) = \sum\limits_{q=1}^{\infty} \frac{\mu(q)}{\phi(q)} c_q(n)
\end{equation}

From this $a_q = \frac{\mu(q)}{\phi(q)}$ and
\begin{align*}
\left\vert a_q c_q(x) \right\vert
&=
\left\vert \frac{\mu(q)}{\phi(q)} c_q(x) \right\vert 
\\
&\le
\left\vert \frac{\mu(q)}{\phi(q)}  \right\vert \left\vert c_q(x) \right\vert 
\\
&\le
\frac{1}{\phi(q)} \phi(q)
\\
&\le 1
\end{align*}
Thus, \Cref{thm:AbelSummationTheorem} applies to the auto-correlation of the arithmetic function:
\begin{equation}
\frac{\phi(n)}{n} \Lambda\left( n \right)
\end{equation}

For paper \cite{BibItem:GadiyarPadma:WienerKhinchin} on the prime pair conjecture and, a fortiori, on the twin prime conjecture,
\begin{equation}
\lim_{N \to \infty} \sum\limits_{n=1}^{N}
\left( \frac{\phi(n) \Lambda\left( n \right)}{n}  \right)
\left( \frac{\phi(n + h) \Lambda\left( n + h \right)}{n + h}  \right)
\end{equation}
is Abel summable to $C\left( h \right)$ which is defined in \cite{BibItem:GadiyarPadma:WienerKhinchin} by equation 7 as:
\begin{equation}
\begin{aligned}
\sum\limits_{q=1}^{\infty}
\frac{{\mu}^2(q)}{{\phi}^2(q)}
c_q\left( h \right)
&=
C\left( h \right)
\\
&= \begin{cases}
2
\prod\limits_{\substack{p \mid h \\ p > 2}} \left( \frac{p-1}{p-2} \right)
\prod\limits_{p>2} \left( 1 - \frac{1}{\left( p - 1 \right)^2} \right)
& \text{h is even}
\\
0 & \text{h is odd}
\end{cases}
\end{aligned}
\end{equation}

Similarly, for paper \cite{BibItem:GadiyarPadma:SophieGermainPrimes} on Sophie German primes,
\begin{equation}
\lim_{N \to \infty}
\frac{
\Phi_{\left( a,b,\ell \right)}\left( N \right)
}{N}
\end{equation}
is Abel-summable to the expression $S$ defined in equation 16 of \cite{BibItem:GadiyarPadma:SophieGermainPrimes}
\begin{equation}
S = \begin{cases}
\frac{2 C}{a}
\prod\limits_{\substack{p > 2 \\ p \vert \left( a b \ell \right)}}
\left( \frac{p-1}{p-2} \right)
&\text{if} \, (a,b) = (a,\ell) = (b,\ell) = 1 \, \text{exactly one of $a$, $b$, $\ell$ is even}
\\
0 &\text{otherwise}
\end{cases}
\end{equation}

\newpage
\section{Conclusions}
The problem with proving the strong form of the Wiener-Khinchin theorem,
equation \eqref{Eq:Abstract:WienerKhinchinForRamanujanSums}, is two fold.
\\

The first is the question:\\
\em
Does the limit
\begin{equation} \label{eq:WinerKhinchinLimit}
\lim_{N \rightarrow \infty} \frac{1}{N} \sum^{N}_{n=1}
f\left( n \right)
\overline{f \left( n \pm m \right)}
\end{equation}
exist or not?
\em
\\

The second is the question:\\
\em
If the limit in \eqref{eq:WinerKhinchinLimit} exists, then what is the value of that limit?
\em
\\

\Cref{thm:AbelSummationTheorem} answers the second question, but not the first.
If the limit in \eqref{eq:WinerKhinchinLimit} exists, then the value of that limit is
\begin{equation*}
\sum_{q=1}^{\infty} \left\Vert a_q \right\Vert^2 c_q(m)
\end{equation*}

With the value of the limit established, the task at hand is for the author to
prove
\begin{equation} \label{eq:Limit2BFound}
\lim_{N \to \infty} \frac{1}{N}
\sum_{n=1}^{N}
\left[ \frac{\phi\left( n     \right) \Lambda\left( n     \right)}{n  } \right]
\left[ \frac{\phi\left( n + h \right) \Lambda\left( n + h \right)}{n+h} \right]
\end{equation}
converges to some finite limit.
If a finite limit to \eqref{eq:Limit2BFound} exists, then the results developed in
\cite{BibItem:GadiyarPadma:WienerKhinchin},
\cite{BibItem:GadiyarPadma:SophieGermainPrimes},
\cite{BibItem:GadiyarPadma:RotaMeetsRamanujan}, and
\cite{BibItem:GadiyarPadma:LinkingCircleAndSieve},
are proved.

\newpage

\end{document}